\newcommand{\TT}{\mathbb{T}}
\newcommand{\abs}[1]{\lvert #1\rvert}
\newcommand{\kq}{\mathbf{k}Q}
\newcommand{\inti}{\operatorname{int}(\TT)}
\newcommand{\hh}{\operatorname{HH}}
\newcommand{\hc}{\operatorname{HC}}
\newtheorem{thm}{Theorem} 
\newtheorem*{thm1}{Theorem} 
\newtheorem{lemma}[thm]{Lemma}   
\newtheorem*{corollary}{Corollary}
\theoremstyle{remark}
\theoremstyle{definition}
\newtheorem{exam}[thm]{Example}
\newtheorem{rem}[thm]{Remark}
\begin{document}
\title{A note on (co)homologies of algebras from unpunctured surfaces}
\author{Yadira Valdivieso-D\'iaz}
 \email{yadira@matem.unam.mx}
\address{Instituto de Matem\'aticas, UNAM
 \'Area de la Investigaci\'on Cient\'ifica, Circuito exterior,
  Ciudad Universitaria, CDMX, 04510, M\'exico.} 

\begin{abstract}
In a previous paper, the author compute the dimension of Hochschild cohomology groups of Jacobian algebras from (unpunctured) triangulated surfaces, and gave a geometric interpretation of those numbers in terms of the number of internal triangles, the number of vertices and the existence of certain kind of boundaries. The aim of this note is computing the cyclic (co)homology and the Hochschild homology of the same family of algebras and giving an interpretation of those dimensions through elements of the triangulated surface.
\end{abstract}

\keywords{Hochschild homology, cyclic cohomology, cyclic homology, surfaces algebras}

\maketitle
\section{Introduction}
A \emph{surface with marked points}, or simply a \emph{surface}, is a pair $(S,M)$, where $S$ is a compact connected Riemann surface with (possibly empty) boundary, and $M$ is a non-empty finite subset of $S$ containing at least one point from each connected component of the boundary of $S$. We said that $(S,M)$ is an \emph{unpunctured surface} if $M$ is contained in the boundary of $S$. We define a \emph{triangulation} as a maximal collection of non-crossing arcs with endpoints in $M$. 

Given an \emph{(tagged) triangulation} $\TT$, it is possible to construct a finite dimensional algebra $A_\TT$, which turns out to be gentle if $(S,M)$ is an unpunctured surface (see \cite{LF09, ABCJP10, Lad12, TV15}), and in particular quadratic monomial. In this note, we compute  three different (co)homologies of those gentle algebras coming from unpunctured surfaces, namely: \emph{Hochschild homology} and \emph{cyclic homology and cohomology}, and we show that there is a combinatorial interpretation of those (co)homologies through the elements of the surface and the triangulation.


Given an associative algebra $A$ over a field $\mathbf k$ and $M$ an $A$-bimodule, we define 
the \emph{Hochschild cohomology} of $A$, with coefficients in $M$, as the graded vector space $\hh^*(A,M)= \operatorname{Ext}^*_{A\otimes_{\mathbf k} A^{\operatorname{op}}}(A,M)$, where $A^{\operatorname{op}}$ is the algebra $A$ with the opposite multiplication. The original definition was introduced by Hochschild in \cite{Hoc45} using a resolution of $A$ as bimodule. Later, Cartan and Eilenberg  in \cite[Chapter 9]{CE56} extended it to algebras over more general rings, and also dualized it, giving the definition of \emph{Hochschild homology}.



The \emph{cyclic cohomology} can be defined in several ways, but the original definition was given by Connes in \cite{Con81}, as a variation of the de Rham homology in spaces with bad behaviour. He used a sub-complex of the Hochschild complex when $M=\operatorname{Hom}_{\mathbf k}(A, {\mathbf k})$, called \emph{cyclic complex}.  In this note, we use the definition of cyclic (co)homology given in \cite[Theorem 4.1.13]{Lod92}, for algebras over rings of characteristic zero. As in the Hochschild cohomology case, the dual definition of cyclic cohomology was given later by several authors: Loday, Kassel, Quillen and Tsygan.


To compute those (co)homologies, we use the computation of the Hochschild homology of quadratic monomial algebras given by Sk\"olderberg in \cite{Sko99}, and two results of Loday \cite[Theorem 4.1.13, Section 2.4.8]{Lod92},  which relate the Hochschild homology and the cyclic homology by one hand, and the last one with the cyclic cohomology by other hand.

The main result of this note is the following:

\begin{thm1}
Let $(S,M, \TT)$ be a triangulated surface and $A_\TT$ be the algebra associated to $(S,M,\TT)$.  Then,
\begin{enumerate}
\item[i)] $\hh_n(A_\TT)\simeq \begin{cases}
\mathbf{k}(Q_\TT)_0  & \textrm{if $n=0$}\\
(A_\TT^!/[A_\TT^!, A_\TT^!])_{n} & \textrm{if $n\equiv 3\pmod{6}$}\\
(A_\TT^!/[A_\TT^!, A_\TT^!])_{n+1} & \textrm{if $n\equiv 2\pmod{6}$}\\
0 & \textrm{otherwise}
\end{cases}$

\item[ii)] $\hc_{n}(A_\TT)\simeq \begin{cases}
\mathbf{k} (Q_\TT)_0  & \textrm{if $n=0$}\\
\mathbf k (Q_\TT)_0  \oplus (A_\TT^!/[A_\TT^!, A_\TT^!])_{n+1} & \textrm{if $n\equiv 2\pmod{6}$}\\
0 & \textrm{otherwise}
\end{cases}
$

\end{enumerate}

Moreover, the dimension of the quotients $(A_\TT^!/[A_\TT^!, A_\TT^!])_{k}$ appearing in $\hh_n(A_\TT)$ and $\hc_n(A_\TT)$, are equal to the number of internal triangles of $\TT$.
\end{thm1}


As consequence of the main Theorem, it follows that the dimension of the cyclic cohomology is computed as follows.

\begin{corollary}
Let $A_\TT$ be the algebra associated to the triangulated surface $(S, M, \TT)$ and denote by $\inti$ the set of internal triangles of $\TT$. Then  $$\operatorname{dim}_{\mathbf k}(\hc^{n}(A_\TT))= \begin{cases}
\mid (Q_\TT)_0\mid  & \textrm{if $n=0$}\\
\mid (Q_\TT)_0\mid + \mid \inti \mid & \textrm{if $n\equiv 2\pmod{6}$}\\
0 & \textrm{otherwise}
\end{cases}
$$
\end{corollary}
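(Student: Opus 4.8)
The plan is to deduce the corollary directly from part ii) of the main Theorem together with the duality between cyclic homology and cyclic cohomology recorded by Loday in \cite[Section 2.4.8]{Lod92}. The first thing I would note is that, since $(S,M)$ is unpunctured, the algebra $A_\TT$ is a finite-dimensional gentle algebra; consequently each Hochschild chain group is finite-dimensional and every cyclic homology group $\hc_n(A_\TT)$, being a subquotient of these, is a finite-dimensional $\mathbf k$-vector space.

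Next I would invoke the duality theorem. Because $\mathbf k$ has characteristic zero, \cite[Section 2.4.8]{Lod92} provides a natural isomorphism $\hc^n(A_\TT)\cong \operatorname{Hom}_{\mathbf k}(\hc_n(A_\TT),\mathbf k)$ for every $n\geq 0$. Taking $\mathbf k$-dimensions and using that $\hc_n(A_\TT)$ is finite-dimensional, this gives the key equality
\[
\operatorname{dim}_{\mathbf k}(\hc^n(A_\TT))=\operatorname{dim}_{\mathbf k}(\hc_n(A_\TT)).
\]

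It then remains to read off the right-hand side from the main Theorem. Part ii) describes $\hc_n(A_\TT)$ explicitly: it is $\mathbf k(Q_\TT)_0$ for $n=0$, it is $\mathbf k(Q_\TT)_0\oplus (A_\TT^!/[A_\TT^!,A_\TT^!])_{n+1}$ for $n\equiv 2\pmod 6$, and it vanishes otherwise. Since $\operatorname{dim}_{\mathbf k}\mathbf k(Q_\TT)_0=\mid(Q_\TT)_0\mid$ and, by the last assertion of the main Theorem, $\operatorname{dim}_{\mathbf k}(A_\TT^!/[A_\TT^!,A_\TT^!])_{n+1}=\mid\inti\mid$, summing the dimensions over the direct sum yields exactly the three cases in the statement.

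The only point requiring care is the hypothesis of the duality theorem: Loday's isomorphism holds for algebras over a field of characteristic zero, so one must keep that assumption on $\mathbf k$ in force, and one must confirm finite-dimensionality of the $\hc_n(A_\TT)$ so that linear duality preserves dimension. Both are immediate here --- the former by hypothesis on $\mathbf k$ and the latter from finite-dimensionality of $A_\TT$ --- so I do not expect a genuine obstacle; the content of the corollary lies entirely in the main Theorem, and the duality merely transports the computed dimensions from homology to cohomology.
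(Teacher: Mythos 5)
Your proposal is correct and follows essentially the same route as the paper: invoke Loday's duality $\hc^n(A_\TT)\cong \operatorname{Hom}_{\mathbf k}(\hc_n(A_\TT),\mathbf k)$ from \cite[Section 2.4.8]{Lod92} (valid here since $A_\TT$ is finite dimensional and unital), then read off the dimensions from part ii) of the main Theorem together with its final assertion that $\operatorname{dim}_{\mathbf k}(A_\TT^!/[A_\TT^!,A_\TT^!])_{n+1}=\mid\inti\mid$. Your write-up merely makes explicit the dimension-counting step that the paper leaves implicit.
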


According to the previous results, the dimensions of the cyclic (co)homology and Hochschild homology of the algebra $A_\TT$ depend on $\mid\inti\mid $ and the number of vertices of the quiver $Q_\TT$, then it is easy to observe those (co)homologies are not invariant under \emph{flips of arcs}, and therefore they are not invariant  under \emph{mutation of quivers with potentials}. See \cite{FST08} for definition of flips of arcs in triangulations, \cite{DWZ08} for definitions of mutations of quivers with potentials and \cite{LF09} for details about the relation between algebras from surfaces and quivers with potentials and its mutations.


\section{Results}
In the first part of this section, we recall some definitions and notations of path algebras and surfaces with marked points and we include the computation of the Hochschild homology given by Sk\"oldberg in \cite{Sko99} for completeness.

Let $Q=(Q_0, Q_1)$ be a finite quiver with a set of vertices $Q_0$ and a set of arrows$Q_1$. We denote the source and the target of an arrow $a\in Q_1$ by $s(a)$ and $t(a)$, respectively. A \emph{path} $w$ of length $l$ is a sequence of $l$ arrows $a_1\cdots a_l$, such that $t(a_k)=s(a_{k+1})$ for every $k=1, \dots, l-1$, we say that its source $s(w)$ is $s(\alpha_1)$ and its target $t(w)$ is $t(\alpha_l)$. We denote by $\abs{w}$ the length of the path $w$. We write $[e_i\mid a_1\cdots a_l\mid e_j]$ instead of $a_1 \cdots a_l$ to emphasize that the source of the path $a_1 \cdots a_l$ is  $e_i$ and its target is $e_j$.

Let $\mathbf k$ be an algebraically closed field. The path algebra $kQ$ is the $\mathbf k$-vector space with basis the set of paths in $Q$ and the product of the basis elements is given by the concatenations of the sequences of arrows of the paths $w$ and $w'$ if they form a path and zero otherwise. Let $F$ be the two-sided ideal of $kQ$ generated by the arrows of $Q$. A two-sided ideal $I$ is said to be admissible if there exists an integer $m_0\geq 2$ such that $F^{m_0}\subseteq I \subseteq F^2$ and its elements are called \emph{relations}. The pair $(Q,I)$ is called a \emph{bounded quiver}.

The quotient algebra $kQ/I$ is said a \emph{quadratic monomial algebra} if the admissible ideal $I$ is generated by paths of length 2. We associate its Koszul dual $A^!$ which is the quotient algebra $A^!=\kq/J$ where $J$ is generated by all paths $w$ of length 2 such that $w\notin I$.

In the next few paragraphs, we give a construction of a quadratic monomial algebra from an unpunctured surface.

Let $(S,M)$ be an unpunctured surface. An \emph{arc} $\tau$ in $(S,M)$ is a not self-crossing curve in $S$ with endpoints in $M$ and not isotopic to a point or to a boundary segment.

For any two arcs $\tau$ and $\tau'$ in $S$, let $e(\tau, \tau')$ be the minimal number of crossings of $\tau$ and $\tau'$, that is, $e(\tau, \tau')$ is the minimum of numbers of crossings of curves $\sigma$ and $\sigma'$, where $\sigma$ is isotopic to $\tau$ and $\sigma$ is isotopic to $\tau'$. Two arcs $\tau$ and $\tau'$ are called \emph{non-crossing} if $e(\tau,\tau')=0$. A \emph{triangulation} $\TT$ is a maximal collection of non-crossing arcs. The arcs of a triangulation $\TT$ cut the surface into \emph{triangles}. A triangle $\triangle$ in $\TT$ is called an \emph{internal triangle} if none of its sides is a boundary segment. We refer to the triple $(S,M,\TT)$ as a \emph{triangulated surface}.

If $\TT=\{\tau_1, \cdots\tau_m\}$ is a triangulation of an unpunctured surface $(S,M)$, we define a quiver $Q_\TT$ as follows: $Q_\TT$ has $m$ vertices, one for each arc in $\TT$. We will denote the vertex corresponding to $\tau_i$ by $e_i$ (or $i$ if there is no ambiguity). The number of arrows from $i$ to $j$ is the number of triangles $\triangle$ in $\TT$ such that the arcs $\tau_i, \tau_j$ form two sides of $\triangle$, with $\tau_j$ following $\tau_i$ when going around the triangle $\triangle$ in the counter-clockwise orientation. Note that the interior triangles in $\TT$ correspond to oriented 3-cycles in $Q_{\TT}$.

Following \cite{ABCJP10,LF09}, in the unpunctured case, the algebra $A_\TT$ is the quotient of the path algebra of the quiver $Q_\TT$ by the two-sided ideal generated by the subpaths of length two of each oriented 3-cycle in $Q_\TT$, then $A_\TT$ is a quadratic monomial algebra. It is easy to see that $A_\TT$ is also a gentle algebra.

Since $A_\TT$ is a quadratic monomial algebra, for any triangulated surface $(S,M,\TT)$, we use the computations of Sk\"oldberg in \cite[Corollary 1]{Sko99}to compute the Hochschild homology of $A_\TT$. Then, as we mention before, we use \cite[Theorem 4.1.13 and Section 2.4.8]{Lod92} to compute the dimension of the cyclic homology and cohomology. Before give the result of Sk\"oldberg we need to introduce some definitions and notations.

Let $A$ be a quadratic monomial algebra, observe that the algebra $A= \amalg_{n\in \mathbb N} A_n$ ( $A^!=\amalg_{n\in \mathbb N}A^!_{n})$ is $\mathbb N$-graded, where $A_n$ ($A_n^!$) is the $\mathbf k$ vector space generated by all paths of length $n$ of $A$(and $A^!$ respectively). This $\mathbb N$-graded is called the \emph{internal grading}. For a $\mathbb N$-graded vector space $V$, we define $V_{\geq i}$ by $\amalg_{i\geq n}V_i$ for a natural $n$.

We give both $A$ and its dual $A^!$ an other  $\mathbb N$-graded, called \emph{homological grading}, by assigning to a basis element $a_1 \cdots a_l$ of $A$ homological degree $0$, and to a basis element $b_1\cdots b_n$ of $A^!$ homological degree $n$. 

For two elements $x, y$ homogeneous with respect to the homological grading, in a bi-graded algebra, we define their graded commutator $[x, y]=xy-(-1)^{\operatorname{homdeg}(x)\operatorname{homdeg}(y)}yx$, where $\operatorname{homdeg}(y)$ is the homological degree, and we extend this definition bi-linearly to any pair of elements. If $A$ is a bi-graded algebra we define the vector space of commutators $[A,A]$ as

$$[A,A]=\operatorname{span}_{\mathbf k}\{[x,y]\mid x, y\in A\}.$$

\begin{rem}
Denote by $\mathcal C_m$ the set of cycles $Q$ of length $m$. Observe that the cyclic group $C_n=\langle g \rangle$ acts on $\mathcal C_n$ by the action $g a_1 \cdots a_n=a_n a_1 \cdots a_{n-1}$. We say that  two cycles $w$ and $w'$  are \emph{cyclically equivalent} if $w$ and $w'$ are in the same orbit.  Moreover, we say that any two cycles $w, w'$ are $[A,A]$-equivalent if $w$ and $w'$ are not elements of $[A,A]$ and they are cyclically equivalent, we denote this situation in the follow way $w\equiv w' \pmod{[A,A]}$.
\end{rem}

%
%
%
%
%
%
According to Sk\"oldberg \cite[Corollary 1]{Sko99} the Hochschild homology of a quadratic monomial algebra is computed as follows.
 
\begin{thm}\label{computations}
The Hochschild homology of any quadratic monomial algebra $A= \kq/I$ is given by

$$HH_{n}(A)\simeq \begin{cases}
\mathbf k Q_0 \oplus (A/[A,A])_{\geq 1} & \textrm{if $n=0$}\\

(A^!/[A^!, A^!])_{2} \oplus (A/[A,A])_{\geq 1} & \textrm{if $n=1$}\\
 (A^!/[A^!, A^!])_{n} \oplus (A^!/[A^!, A^!])_{n+1} & \textrm{if $n\geq 2$}
\end{cases}
$$

%
%
%

\end{thm}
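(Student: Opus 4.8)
The plan is to build an explicit minimal projective bimodule resolution of $A$ and read off the homology after applying $A\otimes_{A^e}(-)$, where $A^e = A\otimes_{\mathbf k}A^{\operatorname{op}}$. Since $A=\kq/I$ is quadratic monomial it is Koszul, so Bardzell's minimal resolution coincides with the Koszul bimodule resolution, and its terms are governed by the Koszul dual. Concretely, setting $E=\mathbf k Q_0$, a basis of $A^!_n$ is the set of paths $b_1\cdots b_n$ each of whose consecutive factors $b_ib_{i+1}$ lies in $I$ (is a relation of $A$); these "chains of overlapping relations" index a resolution $P_\bullet$ with $P_n = A\otimes_E \mathbf k A^!_n\otimes_E A$, whose differential absorbs the first, respectively the last, relation of a chain into the adjacent tensor factor (with a sign dictated by the homological grading, which assigns degree $n$ to each element of $A^!_n$).

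First I would apply $A\otimes_{A^e}(-)$ to $P_\bullet$. Using the standard trace identification $A\otimes_{A^e}(A\otimes_E V\otimes_E A)\cong V\otimes_E A$, the Hochschild chain complex becomes, in homological degree $n$, the span of pairs $(c,p)$ with $c\in A^!_n$ and $p$ a path of $A$ such that the concatenation $cp$ closes up into a cycle of the quiver, taken modulo the cyclic identification coming from the trace. The induced differential $d_n$ then records combinatorially the two ways of peeling off a boundary relation of the chain $c$ and merging it into the path $p$.

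Next I would analyze this reduced complex degreewise. The generators split into two families: the degenerate pairs with $c$ the empty chain, which contribute the cyclic-path part $\mathbf k Q_0\oplus (A/[A,A])_{\geq 1}$ concentrated in the lowest homological degrees, and the genuine pairs with $\abs{c}=n\geq 1$. For the latter, the cyclic symmetry of the trace together with the graded-commutator signs identifies the cycles of $d$ with $A^!$-cycles modulo cyclic rotation, i.e.\ with the internal-degree piece of $A^!/[A^!,A^!]$; the bookkeeping of the two incident relations shows each chain registers in \emph{two} consecutive homological degrees, producing the summands $(A^!/[A^!,A^!])_n$ and $(A^!/[A^!,A^!])_{n+1}$ for $n\geq 2$, while $n=0,1$ absorb the extra $(A/[A,A])_{\geq 1}$ and the single $(A^!/[A^!,A^!])_2$ term. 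Matching contributions degree by degree then yields the three cases of the statement.

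The main obstacle will be the explicit evaluation of $d_n$ on the reduced complex and the verification that, in the quadratic monomial case, it is null: the two absorption terms must either cancel in pairs or land inside $[A,A]$ and $[A^!,A^!]$, so that $H_n$ of the reduced complex is literally the space of chains modulo cyclic equivalence (as set up in the Remark), giving honest direct sums rather than a spectral-sequence extension problem. This vanishing is precisely where monomiality is used, and it is the step that pins down both the degree shifts and the clean decomposition recorded in Theorem~\ref{computations}.
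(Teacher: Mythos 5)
The first thing to note is that the paper itself contains no proof of Theorem \ref{computations}: it is imported wholesale from Sk\"oldberg \cite[Corollary 1]{Sko99}, so your attempt has to be measured against Sk\"oldberg's argument. Its general shape you do reproduce correctly: for a quadratic monomial algebra the minimal (Bardzell $=$ Koszul) bimodule resolution has $n$-th term $A\otimes_E \mathbf{k}A^!_n\otimes_E A$ with $E=\mathbf{k}Q_0$; applying $A\otimes_{A^e}(-)$ gives a complex whose degree-$n$ piece is spanned by pairs $(c,p)$ with $c\in A^!_n$, $p$ a path of $A$ and $cp$ closed; and this complex splits as a direct sum of finite strands indexed by closed paths of $Q$. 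Up to this point the proposal is sound.

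The fatal step is the last one: the claim that the induced differential is null, so that the homology is ``literally the space of chains modulo cyclic equivalence.'' This is false, and the theorem you are proving is incompatible with it. If $d$ were null, $\hh_n(A)$ would be the \emph{whole} degree-$n$ term. Test case (characteristic zero): $A=\mathbf{k}[x]/(x^2)$, one vertex, one loop, one quadratic relation. The reduced complex is $2$-dimensional in every homological degree $n\geq 1$ (basis $(y^n,e)$ and $(y^n,x)$, where $y$ is the dual loop), yet the formula being proved gives $\dim_{\mathbf k}\hh_n(A)=1$ for every $n\geq 1$; hence $d\neq 0$ in infinitely many degrees. The shape of the answer says the same thing: the summand $(A^!/[A^!,A^!])_{n+1}$ inside $\hh_n$ has internal degree $n+1$, so the strand of internal degree $m$ contributes to the two consecutive homological degrees $m$ and $m-1$ --- exactly the signature of the kernel and the cokernel of a nonzero map; under a null differential each strand could only contribute in the single degree where its generators sit. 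Worse, for the algebras of this paper your conclusion contradicts the main theorem: ``cyclic equivalence'' in the sign-free sense of the paper's Remark keeps every power of a $3$-cycle alive, so your formula would make $\hh_n(A_\TT)\neq 0$ for every $n\equiv 0\pmod 3$ whenever $\TT$ has an internal triangle, whereas the main theorem gives $\hh_n(A_\TT)=0$ for $n\equiv 0\pmod 6$, $n\neq 0$. What a correct proof (Sk\"oldberg's) does instead is compute each strand's homology honestly: the differential peels relations off the ends of $c$ into $p$, the cyclic group acts on each strand with signs dictated by the homological grading, and the kernel at the top of a strand and the cokernel one step below are identified with signed invariants and coinvariants; it is only through this identification (e.g.\ $[c,c^{t-1}]=2c^{t}$, which kills even powers) that the quotients $(A^!/[A^!,A^!])_n$, $(A^!/[A^!,A^!])_{n+1}$ and $(A/[A,A])_{\geq 1}$ appear. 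Your proposal mistakes these quotients for the chain groups of the complex, when they are in fact its homology; that conflation is the gap.
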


The computation of the cyclic homology of a quadratic monomial algebra over a field, not necessarily of characteristic zero, was given by Sk\"oldberg in \cite{Sko01}. However, as consequence of Theorem \ref{computations} and the short exact sequence
 
$$0\longrightarrow \widetilde{\hc}_{n-1}\longrightarrow \widetilde{\hh}_{n}\longrightarrow \widetilde{\hc}_{n}\longrightarrow 0,$$
where $\widetilde{\hh}_{n}(A)$ is the quotient $\hh_n(A)/\hh_n(A_0)$ and $\widetilde{\hc}_{n}(A)$ is the quotient $\hc_n(A)/\hc_n(A_0)$,
see \cite[Theorem 4.1.13]{Lod92}, it is possible to compute the cyclic homology of a quadratic monomial algebra using an inductive argument, as follows.

\begin{thm}\cite[Theorem 4]{Sko01}\label{computations2}
Let $A$ be a quadratic monomial algebra, where $\mathbf k$ is a field of characteristic $0$, the cyclic homology of $A$ is given by
$$HC_{n}(A)\simeq \begin{cases}
\mathbf{k} Q_0 \oplus (A/[A,A])_{\geq 1} & \textrm{if $n=0$}\\
\mathbf k Q_0  \oplus (A^!/[A^!, A^!])_{n+1} & \textrm{if $n\equiv 0\pmod{2}$ and $n\neq 0$}\\
(A^!/[A^!, A^!])_{n+1} & \textrm{otherwise}
\end{cases}
$$

\end{thm}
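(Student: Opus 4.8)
The plan is to derive the formula directly from the Hochschild computation of Theorem~\ref{computations} together with the reduced short exact sequence
$$0\longrightarrow \widetilde{\hc}_{n-1}\longrightarrow \widetilde{\hh}_{n}\longrightarrow \widetilde{\hc}_{n}\longrightarrow 0,$$
running an induction on $n$. Since every object in sight is a $\mathbf k$-vector space, this sequence splits, so at each step it suffices to track dimensions; moreover the unital inclusion $A_0=\mathbf k Q_0\hookrightarrow A$ (split by the algebra projection onto degree $0$) induces, by functoriality, a natural decomposition $\hc_n(A)\cong \hc_n(A_0)\oplus \widetilde{\hc}_n(A)$. The whole problem thus reduces to computing the reduced groups $\widetilde{\hc}_n$ by induction and adding back the semisimple contribution $\hc_n(A_0)$.

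First I would record the reduced Hochschild groups. Writing $V_k=(A^!/[A^!,A^!])_k$ and $W=(A/[A,A])_{\geq 1}$, and using that $\hh_n(A_0)=0$ for $n\geq 1$ while $\hh_0(A_0)=\mathbf k Q_0$, Theorem~\ref{computations} yields $\widetilde{\hh}_0\cong W$, then $\widetilde{\hh}_1\cong V_2\oplus W$, and $\widetilde{\hh}_n\cong V_n\oplus V_{n+1}$ for $n\geq 2$.

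Next, the induction. The base case is $\widetilde{\hc}_0=\widetilde{\hh}_0\cong W$, since $\hc_0=\hh_0=A/[A,A]$. The case $n=1$ is handled directly from the sequence, $\widetilde{\hc}_1\cong (V_2\oplus W)/W\cong V_2$, which anchors the recursion. For the inductive step, assuming $\widetilde{\hc}_{n-1}\cong V_n$, the short exact sequence gives $\dim_{\mathbf k}\widetilde{\hc}_n=\dim_{\mathbf k}\widetilde{\hh}_n-\dim_{\mathbf k}\widetilde{\hc}_{n-1}=\dim_{\mathbf k}V_{n+1}$, so $\widetilde{\hc}_n\cong (A^!/[A^!,A^!])_{n+1}$. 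Hence $\widetilde{\hc}_n\cong (A^!/[A^!,A^!])_{n+1}$ for all $n\geq 1$.

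Finally, I would add back the semisimple part. Because $\mathbf k$ has characteristic zero, $\hc_n(\mathbf k)=\mathbf k$ for $n$ even and $0$ for $n$ odd, whence $\hc_n(A_0)\cong\mathbf k Q_0$ for $n$ even and $\hc_n(A_0)=0$ for $n$ odd. Combining this with $\hc_n(A)\cong \hc_n(A_0)\oplus \widetilde{\hc}_n(A)$ reproduces the three cases of the statement. The main obstacle I anticipate is the bookkeeping of the homological grading through the splitting: I must ensure that the image of $\widetilde{\hc}_{n-1}$ inside $\widetilde{\hh}_n$ accounts for exactly the $V_n$ summand (so that the quotient is the specific graded piece $(A^!/[A^!,A^!])_{n+1}$, not merely a space of the same dimension), and that the characteristic-zero hypothesis, which enters both Loday's exact sequence and the computation of $\hc_n(A_0)$, is genuinely invoked. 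Once the dimensions are matched the vector-space isomorphism follows, but it is this identification with the correct graded component that requires the careful argument.
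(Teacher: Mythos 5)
Your proposal is correct and takes essentially the same route as the paper: the paper cites this result from Sk\"oldberg but explicitly notes that it follows from Theorem \ref{computations} together with Loday's short exact sequence $0\to \widetilde{\hc}_{n-1}\to \widetilde{\hh}_{n}\to \widetilde{\hc}_{n}\to 0$ by an inductive argument, which is precisely the induction you carry out (including the splitting off of the semisimple part $\hc_n(\mathbf k Q_0)$). Your closing worry about identifying the image of $\widetilde{\hc}_{n-1}$ with the specific summand $(A^!/[A^!,A^!])_n$ is unnecessary: the statement only asserts an abstract isomorphism of $\mathbf k$-vector spaces, and since each graded piece $(A^!/[A^!,A^!])_k$ is finite dimensional, the dimension count alone suffices.
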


As shown in Theorem \ref{computations} and  Theorem \ref{computations2}, to compute the cyclic and Hochschild homology of any algebra $A$, we need to compute the groups $A/[A,A]$ and $A^!/[A^!, A^!]$. In the following lemmas we show that those groups, for algebras coming from triangulated surfaces, are related to the internal triangles of the triangulation. In order to do that, we need to introduce some notation. Denote by $\operatorname{Int}(\TT)=\{\Delta_1, \cdots, \Delta_t\}$ the set of internal triangles of the triangulated surface $(S,M, \TT)$ and by $Q(\Delta_i)$ the subquiver of $Q_\TT$ associated to the internal triangle $\Delta_i$ for each $i=1, \dots, t$. By construction of $Q_\TT$, the quiver $Q(\Delta_i)$ is a 3-cycle.


\begin{rem}\label{paths}
By definition $[A, A]$ is generated by the elements $[x,y]$ such that $x, y\in A$. In particular, if $w=[e_i\mid a_1a_2\dots a_l\mid e_j]$ is a path such that $e_i\neq e_j$, we have that  $w= [w, e_j]$ . Then any path which is not a cycle is an element of $[A,A]$.
\end{rem}

\begin{lemma}\label{quotient1}
Let $(S,M, \TT)$ be a triangulated surface and $A_\TT$ be the algebra associated to $(S,M, \TT)$. Then the quotient $(A_\TT/[A_\TT,A_\TT])_{\geq 1}$ in trivial.
\end{lemma}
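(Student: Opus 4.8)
The plan is to show that every path of length $\geq 1$ represents the zero class in $A_\TT/[A_\TT,A_\TT]$; since such classes span the positive-degree part for the internal (length) grading, this gives triviality of $(A_\TT/[A_\TT,A_\TT])_{\geq 1}$. Here I would first record that, because every element of $A_\TT$ has homological degree $0$, the graded commutator reduces to the ordinary one, so $[A_\TT,A_\TT]$ is the usual commutator space and is homogeneous for the internal grading; concretely $(A_\TT/[A_\TT,A_\TT])_n = (A_\TT)_n/[A_\TT,A_\TT]_n$. By Remark \ref{paths} every non-cyclic path already lies in $[A_\TT,A_\TT]$, so the only thing left to kill is the class of each nonzero cyclic path (oriented cycle) of positive length.

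Next I would dispose of length-one cycles. A loop at a vertex would force an arc $\tau_i$ to occur as two consecutive sides of a single triangle, i.e. a self-folded triangle; this cannot happen when $(S,M)$ is unpunctured, since all marked points lie on the boundary. Hence $Q_\TT$ has no loops and every cyclic path of positive length has length $\geq 2$.

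Now let $w = \alpha_1\cdots\alpha_l$ be a nonzero cyclic path with $l \geq 2$. The key device is the homogeneous commutator $[\alpha_1,\,\alpha_2\cdots\alpha_l] = w - w'$, where $w' = \alpha_2\cdots\alpha_l\alpha_1$ is the cyclic rotation of $w$. I split according to whether the wrap-around product $\alpha_l\alpha_1$ is a relation. If $\alpha_l\alpha_1 \in I$, then $w'$ contains a relation as a subpath, so $w' = 0$ in the monomial algebra $A_\TT$, whence $w = [\alpha_1,\,\alpha_2\cdots\alpha_l] \in [A_\TT,A_\TT]$ and its class vanishes. If instead $\alpha_l\alpha_1 \notin I$, then for every $n$ the power $w^n = (\alpha_1\cdots\alpha_l)^n$ has all of its consecutive arrow-pairs outside $I$ — those interior to each copy of $w$ because $w \neq 0$, and those at the junctions because $\alpha_l\alpha_1 \notin I$ — so $w^n \neq 0$; these are nonzero basis paths of pairwise distinct lengths, contradicting the finite-dimensionality of $A_\TT$. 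Thus this case never occurs.

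Combining the three paragraphs: every path of length $\geq 1$ is either non-cyclic (a commutator by Remark \ref{paths}) or a nonzero cyclic path of length $\geq 2$ (a commutator by the case analysis), so $(A_\TT/[A_\TT,A_\TT])_{\geq 1} = 0$. I expect the genuinely delicate point to be exactly the cyclic-path case: cyclic words are a priori allowed to survive in a commutator quotient, and it is the combination of the unpunctured hypothesis (no loops) with the finite-dimensionality of $A_\TT$ (ruling out $\alpha_l\alpha_1 \notin I$) that forces them to vanish. That both hypotheses are needed is visible from the fact that the statement fails for monomial algebras with loops, such as $\mathbf k[x]/(x^2)$.
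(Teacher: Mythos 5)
Your proof is correct and takes essentially the same route as the paper: reduce to nonzero cycles via Remark \ref{paths}, note that $Q_\TT$ has no loops, use finite-dimensionality of the quadratic monomial algebra to force the wrap-around product $\alpha_l\alpha_1$ into the ideal, and then exhibit the cycle as a commutator whose rotated term vanishes. The only difference is cosmetic: you make explicit the power argument ($w^n\neq 0$ for all $n$) that the paper compresses into the assertion that $a_la_1\in I_\TT$, and you split the commutator as $[\alpha_1,\alpha_2\cdots\alpha_l]$ rather than $[a_1\cdots a_{l-1},a_l]$.
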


\begin{proof}
Since any path which is not a cycle is an element of $[A_{\TT},A_\TT]$, it is enough to show that any cycle , non-zero  in $A_\TT$ of positive length, is an element of the commutator $[A_\TT,A_\TT]$.

Suppose $w=[e_i\mid a_1a_2\dots a_l\mid e_i]$ is a cycle. Since $Q_\TT$ has no loops, we have that $\mid w \mid\geq 2$. Moreover, the algebra  $A_\TT$ is a finite dimensional quadratic monomial algebra, then $a_l a_i$ is an element of the ideal $I_\TT$, therefore $w=[a_1\cdots a_{l-1}, a_l]$, as we claim.
\end{proof}

\begin{lemma}\label{quotient2}
Let $(S, M, \TT)$ be a triangulated surface and $A_\TT$ be the algebra associated to $(S,M, \TT)$ and $n\neq 3(2t+1)$ for any $t\in\mathbb Z^{\geq 0}$. Then the quotient $(A_\TT^!/[A_\TT^!, A_\TT^!])_n$ is trivial. Moreover, if $n=3(2t+1)$ for some $t\in\mathbb Z^{\geq 0}$, then  $\operatorname{dim}_{\mathbf k}(A_\TT^!/[A_\TT^!, A_\TT^!])_n=\mid int(\TT)\mid$
\end{lemma}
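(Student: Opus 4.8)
The plan is to describe the nonzero paths of $A_\TT^!$ explicitly, observe that only cyclic paths can survive modulo the graded commutator, and then read off each graded piece by tracking the signs produced by cyclic rotation. First I would determine the nonzero paths of $A_\TT^!=\mathbf{k}Q_\TT/J$: since $J$ is generated by the length-two paths that are \emph{not} relations of $A_\TT$, a length-two path is nonzero in $A_\TT^!$ exactly when it is a subpath of one of the oriented $3$-cycles $Q(\Delta_i)$. Because every arrow of $Q_\TT$ belongs to a unique triangle, two consecutive arrows compose to a nonzero path only if they lie in the same internal triangle; hence every nonzero path of length $\geq 2$ in $A_\TT^!$ runs entirely around a single $3$-cycle $Q(\Delta_i)$. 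In particular a cyclic path occurs only when its length is a multiple of $3$, and for $n=3m$ each internal triangle contributes exactly the three rotations of $(\alpha\beta\gamma)^m$, where $\alpha\beta\gamma$ generates $Q(\Delta_i)$.

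Next, the argument of Remark~\ref{paths} applied to $A_\TT^!$ shows that every non-cyclic path lies in $[A_\TT^!,A_\TT^!]$, so the quotient in degree $n$ is trivial whenever $3\nmid n$ and is governed by the cyclic paths when $n=3m$. Fixing a length-$n$ cycle $c=a_1\cdots a_n$ inside one triangle, the splitting $c=pq$ with $p=a_1\cdots a_k$ and $q=a_{k+1}\cdots a_n$ gives the graded commutator $[p,q]=c-(-1)^{k(n-k)}qp$, where $qp=a_{k+1}\cdots a_n a_1\cdots a_k$ is again a cyclic rotation of $c$; hence $c\equiv(-1)^{k(n-k)}qp\pmod{[A_\TT^!,A_\TT^!]}$ for every $k$. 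Taking $k=3$ reproduces the word $c$ itself, because $c$ has period $3$, and yields the decisive relation $c\equiv(-1)^{n-1}c$.

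The conclusion then splits on the parity of $n=3m$. If $n$ is even (equivalently $n\equiv 0\pmod 6$) the relation reads $c\equiv -c$, so $2c\equiv 0$, and since $2$ is invertible in the base field $c\equiv 0$; together with the case $3\nmid n$ this proves triviality for all $n\neq 3(2t+1)$. If $n$ is odd (equivalently $n=3(2t+1)$) every sign $(-1)^{k(n-k)}$ equals $+1$, so the three rotations of $(\alpha\beta\gamma)^m$ are identified and survive, contributing one dimension per internal triangle; since cycles from distinct triangles use disjoint arrows these contributions are independent, giving $\dim_{\mathbf k}(A_\TT^!/[A_\TT^!,A_\TT^!])_n=|\operatorname{Int}(\TT)|$.

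The step I expect to be the main obstacle is the nonvanishing in this surviving case: I must rule out that $c$ is killed by some commutator other than the rotation relations. For this I would verify that a commutator $[p,q]$ acquires a cyclic component only when both $pq$ and $qp$ are cycles, which forces $s(p)=t(q)$ and makes the two products cyclic rotations of a single cyclic word lying in one triangle; thus inside each triangle the rotation relations span exactly a two-dimensional subspace of the three-dimensional span of the three rotations, leaving a one-dimensional quotient. Invertibility of $2$ is genuinely needed here, since in characteristic $2$ the even-degree cycles would not vanish and the stated vanishing for $n\equiv 0\pmod 6$ would fail.
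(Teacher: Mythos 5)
Your proposal is correct and follows essentially the same route as the paper's proof: identifying the nonzero paths of $A_\TT^!$ as runs around the $3$-cycles of internal triangles, discarding non-cyclic paths via Remark~\ref{paths}, killing even-degree cycles with the rotation-by-three commutator (the paper's $[a_1a_2a_3,(a_1a_2a_3)^{t-1}]=2(a_1a_2a_3)^t$), and observing that in degree $3(2t+1)$ the signs disappear so that commutators only identify the rotations of each triangle's cycle, leaving one dimension per internal triangle. Your explicit check that a commutator can contribute a cyclic component only as a difference of rotations, and your remark that invertibility of $2$ is genuinely used, are slightly more detailed than the paper's write-up but do not constitute a different argument.
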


\begin{proof}
By definition $A^!$ is the path algebra $kQ_\TT/J$, where $J$ is the ideal generated by the paths $m$ of length 2 such that $m\notin I$. Before give a basis for $A^!/[A^!, A^!]$, we first give the generators of $J$ and the non-zero paths in $A^!$.

Recall any path $w=[e_i\mid a_1a_2\dots a_l\mid e_j]$, non-zero in  $A$, is coming from arcs attached to a marked point $x$ as in Figure \ref{path}, and each arrow is opposite to $x$. Denote by $w_x$ the maximal non-zero path coming from the arcs attached to the marked point $x$. Then $J$ is generated by the subpaths of length $2$ of each maximals non-zero paths, therefore element of the basis of  $A^!$ is a sequence of consecutive arrows of a $3$-cycle $Q(\Delta)$ associated to an internal triangle $\Delta$ of $\TT$ or an arrow in $Q_\TT$.

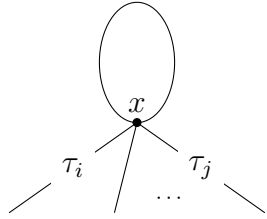
\begin{figure}[ht!]
\centering
\begin{tikzpicture}
\draw (0,0) ellipse (0.5cm and 0.8cm);
\filldraw [black] (0,-0.8) node[above]{$x$}circle (1.5pt);
\tikzset{every node/.style={fill=white}};
\draw (0,-0.8) to node[midway]{$\tau_j$} (1.7,-2)
		(0,-0.8) to (-0.3,-2)
		(0,-0.8) to node{$\tau_i$} (-1.7,-2);
\node at (0.45, -1.8){\tiny{$\cdots$}};
\end{tikzpicture}
\caption{Non-zero path}
\label{path}
\end{figure}

Since the length of any cycle in $A^!$ is always a multiple of $3$ and any path which is not a cycle is element of $[A^!, A^!]$, it is clear that $[A^!, A^!]\bigcap A^!_n=A^!_n$ for any $n$ which is not multiple of 3. 
 


Now, suppose $n$ is multiple of 3 and even. We claim that $[A^!, A^!]\bigcap A^!_n=A^!_n$. Let $w$ be path of length $n$. If $w$ is not a cycle, then $w$ is an element of $[A^!, A^!]\bigcap A_n$, see Remark \ref{paths}. Suppose $w$ is a cycle, then  $w=(a_1a_2a_3)^t$ for some $t\in\mathbb N$ such that $n=3t$. Therefore $[a_1a_2a_3, (a_1a_2a_3)^{t-1}]=(a_1a_2a_3)^t-(-1)^{(3)(3t-3)}(a_1a_2a_3)^t$, observe that $(-1)^{(3)(3t-3)}=-1$, then  $[a_1a_2a_3, (a_1a_2a_3)^{t-1}]=2(a_1a_2a_3)^t$, hence $[A^!, A^!]\bigcap A^!_n=A^!_n$.

Then, in both cases, when $n$ is not multiple of $3$ or $n$ is multiple of $3$ and even, we have that $(A^!/[A^!, A^!])_n=0$.

Finally, suppose $n$ is multiple of 3 and odd. By definition $$[w_1, w_2]=w_1w_2 - (-1)^{\mid w_1\mid \mid w_2\mid}w_2w_1$$  is a generator of $[A^!, A^!]\bigcap A^!_n$, if  $w_1$ and $w_2$ are paths such that the length $\mid w_1w_2\mid=n$ or $\mid w_2w_1\mid =n$. Moreover, since $n$ is odd and $n= \mid w_1 \mid +\mid w_2 \mid$, we have that $(-1)^{\mid w_1\mid \mid w_2\mid}=1$, then $$[w_1, w_2]=w_1w_2 - w_2w_1.$$

Let $g$ be a generator of the cyclic group $C_n$. We claim that any generator of $[A^!, A^!]\bigcap A^!_n$ is an element of the form:

\begin{itemize}
\item[(i)] $(c_1c_2c-3)^t -g^r(c_1c_2c_3)^t$ where $c_1c_2c_3$ is a 3-cycle coming from an internal triangle $\Delta_k$ of $\TT$  and $r\in \{1, 2, \dots, n-1\}$ or

\item[(ii)] a path $w'$ which is not a cycle, of length $n$.
\end{itemize}

Let $w_1=a_1a_2\dots a_{l_1}$ and $w_2=b_1b_2\dots b_{l_2}$ paths such that the length $\mid w_1w_2\mid=n$ or $\mid w_2w_1\mid=n$. Suppose $[w_1,w_2]$ is not a path, then $w_1w_2$ and $w_2w_1$ are non-zero cycles in $A^!$, and therefore both of them  are sequences of consecutive arrows of the same $3$-cycle $Q(\Delta)$ associated to an internal triangle $\Delta$ of $\TT$, hence $w_2w_1= g^r w_1w_2$ for some $r\in \{1,2\dots, n-1\}$ and $w_1w_2=(c_1c_2c_3)^t$, for some $c_1c_2c_3$ is a 3-cycle coming from an internal triangle $\Delta_k$ of $\TT$, as we claim.

And, as consequence, $\operatorname{dim}_{\mathbf k}((A^!/[A^!, A^!])_n)= \mid \inti\mid$.

\end{proof}

\begin{proof}[Proof of Main Theorem]
Let $A_\TT$ be the algebra associated to the triangulated surface $(S,M,\TT)$.

We first compute the cyclic homology groups $\hc_n(A_\TT)$. Since $(A_\TT/[A_\TT,A_\TT])_{\geq 1}$ is trivial by Lemma \ref{quotient1}, we have that $\hc_{0}(A_\TT)=\mathbf{k}Q_0$. Let $n\geq 1$, by Lemma \ref{quotient2} we have that the quotient $(A_\TT^!/[A_\TT^!, A_\TT^!])_{n+1}$ is also trivial for any $n+1\neq 3(2k+1)$, then by Theorem \ref{computations2} we have that the $n$-cyclic homology group of $A_\TT$ is given by
$$\hc_{n}(A_\TT)\simeq \begin{cases}
\mathbf k Q_0  & \textrm{if $n=0$}\\
\mathbf k Q_0  \oplus (A_\TT^!/[A_\TT^!, A_\TT^!])_{n+1} & \textrm{if $n\equiv 2\pmod{6}$}\\
0 & \textrm{otherwise}
\end{cases}
$$

Similarly by Theorem \ref{computations} and also Lemmas \ref{quotient1} and \ref{quotient2}, we have that the Hochschild homology groups of $A_\TT$ is given by 

$$\hh_n(A_\TT)\simeq \begin{cases}
\mathbf{k}(Q_\TT)_0  & \textrm{if $n=0$}\\
(A_\TT^!/[A_\TT^!, A_\TT^!])_{n} & \textrm{if $n\equiv 3\pmod{6}$}\\
(A_\TT^!/[A_\TT^!, A_\TT^!])_{n+1} & \textrm{if $n\equiv 2\pmod{6}$}\\
0 & \textrm{otherwise}
\end{cases}$$
\end{proof}

It follows from  Loday \cite[Section 2.4.8]{Lod92}, that  for any finite dimensional algebra $A$ with unit the $n$-cyclic homology group $\hc^n(A)$ of $A$ and the $n$-cyclic cohomology group $\operatorname{Hom}_k(\hc_n(A), k)$  of $A$ are isomorphic. Since $A_\TT$ is a finite dimensional algebra for any  triangulated surface $(S,M,\TT)$, the Corollary follows from the main Theorem.

To conclude this note, we compute the dimension of the cyclic (co)homologies groups and dimension of the Hochschild homology groups of two algebras from surfaces, which are closed related.

\begin{exam}
Consider the triangulated surface $(S,M,\TT)$ of the Figure \ref{3boundaries}, where $S$ is a surface of genus zero with 3 boundaries components and four marked points.

%

\begin{figure}[ht!]
\centering
\subfloat{
\begin{tikzpicture}[scale=.95]
\draw[thick](-0.2,1.5) circle(0.3cm)
(-.7,-.5) circle(0.3cm)
(3,1.8)circle(0.3cm);
\node at(-0.2,1.5){\tiny{$B_2$}};
\node at(-.7,-.5) {\tiny{$B_3$}};
\node at(3,1.8){\tiny{$B_1$}};
\draw (-.9,-.5) circle(0.5cm);
\tikzset{every node/.style={fill=white}};
\draw[black] (-.5,1.5) .. controls +(85:1.5cm) and +(180:1cm).. node[midway]{$\tau_3$}(3,1.5);
\draw[black] (-.5,1.5) .. controls +(-85:1.5cm) and +(-180:1cm).. node[midway]{$\tau_4$}(3,1.5);
\draw[black] (-.5,1.5) .. controls +(230:1.5cm) and +(170:1cm).. node[midway]{$\tau_1$}(-1.5,-1.3);
\draw(-1.5,-1.3) to [out=-10, in=-50, looseness=1.5] (-.4,-.5);
\draw[black] (-.5,1.5) .. controls +(100:3.5cm) and +(10:3cm).. node[midway]{$\tau_2$}(3,1.5);
\draw(-.5,1.5) to  node[midway]{$\tau_5$}(-.4,-.5);
\draw(-.4, -.5) to node[midway]{$\tau_6$}(3,1.5);
\node at (-1.2, -1){$\tau_7$};
\filldraw [black] (-.5,1.5) circle (1.5pt)
					(3,1.5) circle (1.5pt)
					(-.4,-.5)circle(1.5pt)
					(-1,-.5)circle(1.5pt);
\end{tikzpicture}
}
 \subfloat{
  \begin{tikzpicture}[scale=.6]
 \matrix (m)[matrix of math nodes, row sep=1.5em,column sep=3em,ampersand replacement=\&]
{7  \&  \&  4\&  \\
    \& 5\&   \&  \\
  1 \&  \&  6\& 3\\
    \& 2\&   \&  \\
};
\path[-stealth]
	(m-1-1) edge (m-3-1)
	(m-3-1) edge (m-2-2) edge (m-3-3)
	(m-2-2) edge(m-1-1) edge (m-1-3)
	(m-3-3) edge(m-2-2) edge(m-4-2)
	(m-4-2)edge(m-3-1)
	(m-1-3)edge(m-3-3)
	(m-3-4) edge (m-1-3) edge(m-4-2);
\end{tikzpicture}
}
\caption{The triangulated surface $(S,M,\TT)$}
\label{3boundaries}
\end{figure}

Observe that the quiver $Q_\TT$ has 7 vertices and there are 3 internal triangles: $\triangle_1(\tau_1, \tau_5, \tau_7)$, $\triangle_2(\tau_1,\tau_6,\tau_2)$ and $\triangle_3(\tau_6,\tau_5,\tau_4)$. Then, according to our main Result, the dimension of the Hochschild homology groups $\hh_n(A_\TT)$ are computed as follows:

 $$\operatorname{dim}_{\mathbf k}\hh_n(A_\TT)= \begin{cases}
7  & \textrm{if $n=0$}\\
3 & \textrm{if $n\equiv 3\pmod{6}$ or $n\equiv 2\pmod{6}$ }\\
0 & \textrm{otherwise}
\end{cases}$$

Since $\operatorname{dim}_{\mathbf k}(\hc^n(A_\TT))=\operatorname{dim}_{\mathbf k}(\hc_n(A_\TT))$  by Corollary for any $n\in \mathbb Z^{\geq 0}$, the dimension of the cyclic (co)homology groups $\hc_n(A_\TT)$ and $\hc^n(A_\TT)$ are computed as follows:

$$\operatorname{dim}_{\mathbf k}(\hc^n(A_\TT))=\operatorname{dim}_{\mathbf k}(\hc_n(A_\TT))= \begin{cases}
7  & \textrm{if $n=0$}\\
10 & \textrm{if $n\equiv 2\pmod{6}$}\\
0 & \textrm{otherwise}
\end{cases}
$$

Finally, consider the triangulated surface $(S,M, \TT')$ of Figure \ref{3boundaries2}, which is obtained by removing the arc $\tau_7$ of the triangulation $\TT$ and replacing it by $\tau_7'$, that is, a \emph{flip of arc $\tau_7$}. 

\begin{figure}[ht!]
\centering
\subfloat{
\begin{tikzpicture}[scale=.97]
\draw[thick](-0.2,1.5) circle(0.3cm)
(-.7,-.5) circle(0.3cm)
(3,1.8)circle(0.3cm);
\node at(-0.2,1.5){\tiny{$B_2$}};
\node at(-.7,-.5) {\tiny{$B_3$}};
\node at(3,1.8){\tiny{$B_1$}};
\tikzset{every node/.style={fill=white}};
\draw[black] (-.5,1.5) .. controls +(85:1.5cm) and +(180:1cm).. node[midway]{$\tau_3$}(3,1.5);
\draw[black] (-.5,1.5) .. controls +(-85:1.5cm) and +(-180:1cm).. node[midway]{$\tau_4$}(3,1.5);
\draw[black] (-.5,1.5) .. controls +(230:1.5cm) and +(170:1cm).. node[midway]{$\tau_1$}(-1.5,-1.3);
\draw(-1.5,-1.3) to [out=-10, in=-50, looseness=1.5] (-.4,-.5);
\draw[black] (-.5,1.5) .. controls +(100:3.5cm) and +(10:3cm).. node[midway]{$\tau_2$}(3,1.5);
\draw (-.5,1.5) to (-1,-.5);
\draw(-.5,1.5) to  node[midway]{$\tau_5$}(-.4,-.5);
\draw(-.4, -.5) to node[midway]{$\tau_6$}(3,1.5);
\node at (-.86, 0.26){$\tau_7'$};
\filldraw [black] (-.5,1.5) circle (1.5pt)
					(3,1.5) circle (1.5pt)
					(-.4,-.5)circle(1.5pt)
					(-1,-.5)circle(1.5pt);
\end{tikzpicture}
}
 \subfloat{
  \begin{tikzpicture}[scale=1.2]
 \matrix (m)[matrix of math nodes, row sep=1.5em,column sep=3em,ampersand replacement=\&]
{7  \&  \&  4\&  \\
    \& 5\&   \&  \\
  1 \&  \&  6\& 3\\
    \& 2\&   \&  \\
};
\path[-stealth]
	(m-1-1) edge (m-2-2)
	(m-3-1) edge (m-1-1)
	(m-3-1) edge (m-3-3)
	(m-2-2) edge (m-1-3)
	(m-3-3) edge(m-2-2) edge(m-4-2)
	(m-4-2) edge(m-3-1)
	(m-1-3) edge(m-3-3)
	(m-3-4) edge (m-1-3) edge(m-4-2);
\end{tikzpicture}
}
\caption{The triangulated surface $(S,M,\TT')$}
\label{3boundaries2}
\end{figure}

In this case, the quiver $Q_\TT'$ has also 7 vertices, which is actually an invariant of $(S,M)$, but there are only 2 internal triangles: $\Delta_2(\tau_1,\tau_6,\tau_2)$ and $\Delta_3(\tau_6, \tau_5, \tau_4)$ , then:

 $$\operatorname{dim}_{\mathbf k}\hh_n(A_\TT')= \begin{cases}
2 & \textrm{if $n\equiv 3\pmod{6}$ or $n\equiv 2\pmod{6}$ }\\
\operatorname{dim}_{\mathbf k}\hh_n(A_\TT) & \textrm{otherwise}
\end{cases}$$
and
$$\operatorname{dim}_{\mathbf k}(\hc^n(A_\TT'))=\operatorname{dim}_{\mathbf k}(\hc_n(A_\TT'))= \begin{cases}
9 & \textrm{if $n\equiv 2\pmod{6}$}\\
\operatorname{dim}_{\mathbf k}(\hc^n(A_\TT)) & \textrm{otherwise}
\end{cases}
$$

Therefore, the (co)homologies computed in this note are not invariant under \emph{flips} and \emph{mutations of quivers with potentials}.

\end{exam}



\end{document}